\documentclass[hidelinks,12pt,a4paper,reqno]{amsart}
\usepackage[utf8]{inputenc}
\usepackage{enumitem}
\usepackage{amsmath}
\usepackage{amsfonts}
\usepackage{amssymb}
\usepackage[all]{xy}
\usepackage{xfrac}
\usepackage[yyyymmdd,hhmmss]{datetime}

\usepackage[english]{babel}
\usepackage{dsfont}
\usepackage{bbold}
\usepackage{soul}

\usepackage[backref=page]{hyperref}
\renewcommand*{\backref}[1]{}
\renewcommand*{\backrefalt}[4]{%
    \ifcase #1 (Not cited.)%
    \or        (Cited on page~#2.)%
    \else      (Cited on pages~#2.)%
    \fi}

\newtheorem{theorem}{Theorem}[section]

\newtheorem{definition}{Definition}[section]
\newtheorem{proposition}[theorem]{Proposition}

\newtheorem{example}{Example}[section]




 \author{N. Martins-Ferreira}
 \address{School of Technology and Management, Centre for Rapid and Sustainable Product Development - CDRSP, Polytechnic Institute of Leiria, P-2411-901 Leiria, Portugal.}
 \email{martins.ferreira@ipleiria.pt}
\title[]{On the structure of topological spaces}

 \subjclass[2010]{Primary 06F30, 54H11, 22A15; Secondary 22A05, 17D10}
 \keywords{preorder, fibrous preorder, spacial fibrous preorder, cartesian spacial fibrous preorder, topological space, topological group, metric space, first-countable space, lax-left-associative Mal'tsev operation}

\thanks{
This work is supported by the Fundação para a Ciência e a Tecnologia (FCT) and Centro2020 through the Project references: UID/Multi/04044/2019; PAMI - ROTEIRO/0328/2013 (Nº 022158); Next.parts (17963), and also by CDRSP and ESTG from the Polytechnic of Leiria.
}

\begin{document}

\begin{abstract}
The structure of topological spaces is analysed here through the lenses of fibrous preorders. Each topological space has an associated fibrous preorder and those fibrous preorders which return a topological space are called spacial. A special class of spacial fibrous preorders consisting of an interconnected family of preorders indexed by a unitary magma is called cartesian and studied here. Topological spaces that are obtained from those fibrous preorders, with a unitary magma \emph{I}, are called \emph{I}-cartesian and characterized. The characterization reveals a hidden structure of such spaces. Several other characterizations are obtained and special attention is drawn to the case of a monoid equipped with a topology. A wide range of examples is provided, as well as general procedures to obtain topologies from other data types such as groups and their actions. Metric spaces and normed spaces are considered as well.
\end{abstract}

\maketitle

\today; \currenttime

\section{Introduction}

The definition of a topological space as we know it today has a long history (see e.g. \cite{James 1999}) and it is so rich and full of twists and turns that the simple task of tracking down its origins is transformed into an overwelming undertaking (see e.g. \cite{Moore 2008}). Arguably, it starts at the beginning of the twentieth century with seminal works notably by Dedekind \cite{Dedekind 1931},  Lebesgue \cite{Lebesgue 1902}, Riesz \cite{Riesz 1905}, de la Vallé Poussin \cite{Poussin 1916} and Frechét \cite{Frechet 1906, Frechet 1921}, whose primary interests were still focused on generalizing results from the previous century. The first few decades were characterized by several further improvements and alternative definitions, notably by Hausdorff \cite{Hausdorff 1914, Hausdorff 1927}, Carathéodory \cite{Caratheodory 1918}, Kuratowski \cite{Kuratowski 1922}, Tietze \cite{Tietze 1923}, Aleksandrov \cite{Aleksandrov 1925}, while the vision of which definitions would be adopted as primitive and which concepts aught to be derived was nothing but a blur. It was only in the mid thirties that the works by Aleksandrov and Hopf \cite{Aleksandrov and Hopf 1935}, Sierpinski \cite{Sierpinski 1934}, Kuratowski \cite{Kuratowski 1933} and Lefschetz \cite{Lefschetz 1930, Lefschetz 1942} started to be influential. The current established notion has finally settled down with the wide dissemination of the classical works by Bourbaki \cite{Bourbaki 1951} and kelly \cite{Kelley 1955}. From this modern point of view, a topological space is presented as a pair $(X,\tau)$, where $X$ is a set and $\tau\subseteq \mathcal{P}(X)$ is a topology on $X$, i.e., a collection of so-called open sets which are nothing but subsets of $X$, closed under finite intersections and arbitrary unions and moreover at least the empty set and the set $X$ itself must be open.

In spite of all the advantages of such an abstract definition, which has unquestionably led to great progresses over the last more than hundred years not only in mathematics but also in physics and in other areas of knowledge, there are still a few difficulties in the treatment of topological spaces at this level of abstraction. The trouble is that the process of simplifying the definition of topological space to its bare bone has two undesirable effects: one is the appearance of redundant information, another one is that its true structure is being hidden. Let us mention two concrete examples of this phenomena.

If $X$ is a finite set then, as it is well known \cite{Aleksandrov 1937}, a topology on $X$ is nothing but a preorder, which is simply a reflexive and transitive relation. Shouldn't this be a simple observation that would follow from the definition of a topological space?

If $X$ is a group and if the map $X\times X\to X$; $(x,y)\mapsto xy^{-1}$ is required to be continuous (with the product topology on its domain and the group operation used to form $xy^{-1}$) then we get a topological group \cite{Montgomery 1955} and  the range of possible topologies is severely restricted. Surprisingly, a topological group is presented as a group equipped with an arbitrary topology (which is required to be compatible with the group operation) while it is clear that such topologies must be simpler than arbitrary ones. This is mainly because the simplification on the structure of those topologies is not apparent from its definition.  Shouldn't there be a way of identifying which key features of an arbitrary topology gives compatibility with a group operation?

Some attempts have been made to overcome these difficulties, notably Brown in his book Topology and Groupoids \cite{Brown 1988}. In \cite{NMF 2014} the notion of spacial fibrous preorder was used to structure the category of topological spaces so that it becomes apparent that every preorder gives rise to a topology and moreover, for finite sets, there are no possibilities other than that. 

The purpose of this paper is to use some ideas from \cite{NMF 2014} in a better understanding on the structure of metric spaces and topological groups from the point of view of abstract topological spaces.

 A spacial fibrous preorder  \cite{NMF 2014} has the advantage of being intuitive (it is in some sense a modification of a preorder) and yet there is a categorical equivalence between spacial fibrous preorders and topological spaces. Moreover, it suggests that the study of continuous maps may be pursued as a fine structure rather than a property. Instead of a map having the property of being continuous or not it may rather be viewed as a fine structure which can detect different levels of continuity such as uniform continuity and other related concepts. Nevertheless, a reader who is not familiar with spacial fibrous preorders in the first place may fail to consider it as an appealing structure. For that reason we have decided to present here a simpler version which is nonetheless still sufficient for our purposes. In order to distinguish it from the general case we will call it \emph{cartesian spacial fibrous preorder}.
 
  A \emph{fibrous preorder} \cite{NMF 2014} is a sequence $R\to A\to B$ with $R\subseteq A\times B$ satisfying some conditions. If the set $A$ is the cartesian product of a set $I$ and the set $B$ then we will speak of a \emph{cartesian fibrous preorder}. Not every fibrous preorder is realizable as a topological space, only the \emph{spacial} ones are so \cite{NMF 2014}. In the same manner we will restrict cartesian fibrous preorders to spacial ones and simplify its structure a little bit by considering a unitary magma as its indexing set, $I$, rather than the slightly more general structure considered in \cite{NMF 2014}. Note that when $I=\{1\}$ is a singleton set then $A=I\times B$ can be identified with $B$ and the relation $R\subseteq B\times B$ becomes precisely a preorder.

\section{Cartesian spacial fibrous preorders and unitary magmas}

Let $(I,\cdot,1)$ be a unitary magma. That is a set $I$ together with a distinguished element $1\in I$ and a binary operation $${I\times I\to I};\quad (i,j)\mapsto i\cdot j,$$ such that $i\cdot 1=i=1\cdot i$ for all $i\in I$. Sometimes we write $i\cdot j$ simply as $ij$. A unitary magma is the same as a monoid when the associativity condition  $i(jk)=(ij)k$ holds true for all $i,j,k\in I$.  

\begin{definition}\label{def: cartesian spacial fibrous preorder}
A \emph{cartesian spacial fibrous preorder}, indexed by the unitary magma $(I,\cdot,1)$, is a system $(X,(\leq^{i})_{i\in I},(\partial^{i})_{i\in I})$ where $X$ is a set and for every $i\in I$, $\leq^{i}$ is a binary relation on $X$ whereas $\partial^{i}$ is a partial map $X\times X\to I$ which is defined for all pairs $(x,y)$ such that $x\leq^{i} y$. Moreover, the following conditions must hold for all $x,y,z\in X$ and $i,j\in I$:
\begin{enumerate}
\item[(C1)] $x\leq^{i} x$;
\item[(C2)] if $x\leq^{i} y$, $\partial^{i}(x,y)=j$ and $y\leq^{j} z$ then $x\leq^{i} z$;
\item[(C3)] if $x\leq^{ij} y$ then $x\leq^{i} y$ and $x\leq^{j} y$.
\end{enumerate}
\end{definition}

A metric space is the best example to illustrate the structure while providing useful intuition further on.

\begin{example}
 Let $I=\mathbb{N}$ be the unitary magma of natural numbers with $1\in \mathbb{N}$ as the neutral element and the usual multiplication as binary operation. Let $X$ be any metric space with metric $d\colon{X\times X\to [0,+\infty[}$. Under these assumptions we put $x\leq^n y$ if and only if $d(x,y)<\frac{1}{n}$ and choose $\partial^{n}(x,y)=k$ to be such that $\frac{1}{k}\leq \frac{1}{n}-d(x,y)$, with $x,y\in X$ and $n,k\in \mathbb{N}$. It is not difficult to see that conditions (C1)-(C3) are satisfied.
\end{example}

As for metric spaces, cartesian spacial fibrous preorders give rise to topological spaces.


\begin{proposition}\label{thm: fibrous to spaces}
Every cartesian spacial fibrous preorder $$(X,(\leq^{i})_{i\in I},(\partial^{i})_{i\in I})$$ gives rise to a topological space $(X,\tau)$ with $\tau$ defined as 
\begin{equation}\label{eq: tau}
\mathcal{O}\in \tau \Leftrightarrow \forall x\in \mathcal{O}, \exists i\in I, N(i,x)\subseteq \mathcal{O}
\end{equation}
where $N(i,x)=\{y\in X\mid x\leq^{i} y\}$.
\end{proposition}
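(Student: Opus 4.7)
The plan is to verify directly the three axioms for $\tau$ to be a topology: that $\emptyset, X \in \tau$, and that $\tau$ is closed under arbitrary unions and under finite intersections.

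The first two items I expect to be essentially automatic from the definition in (\ref{eq: tau}). For $\emptyset$, the universal quantifier over $x$ is vacuously satisfied. For $X$, given any $x \in X$ I take $i = 1 \in I$ (the distinguished element of the unitary magma); then the inclusion $N(1,x) \subseteq X$ is immediate, so $X \in \tau$. For an arbitrary union, if $x \in \bigcup_\alpha \mathcal{O}_\alpha$, then $x \in \mathcal{O}_{\alpha_0}$ for some index, and any $i \in I$ witnessing openness of $\mathcal{O}_{\alpha_0}$ at $x$ also witnesses openness of the union at $x$, since $N(i,x) \subseteq \mathcal{O}_{\alpha_0} \subseteq \bigcup_\alpha \mathcal{O}_\alpha$.

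The only step that requires real content is closure under binary intersections, from which the case of arbitrary finite intersections then follows by induction. Suppose $\mathcal{O}_1, \mathcal{O}_2 \in \tau$ and $x \in \mathcal{O}_1 \cap \mathcal{O}_2$. Pick $i_1, i_2 \in I$ with $N(i_k, x) \subseteq \mathcal{O}_k$ for $k=1,2$, and take as candidate witness at $x$ the product $j := i_1 \cdot i_2 \in I$. Condition (C3) is tailor-made for this: it states that $x \leq^{i_1 i_2} y$ implies both $x \leq^{i_1} y$ and $x \leq^{i_2} y$, which is exactly the inclusion $N(j,x) \subseteq N(i_1,x) \cap N(i_2,x) \subseteq \mathcal{O}_1 \cap \mathcal{O}_2$.

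There is no substantial obstacle — the result is essentially an unpacking of the definitions — but the point worth flagging is the division of labour among the axioms: (C3) provides exactly the intersection property of the neighborhood base $\{N(i,x)\mid i \in I\}$ at each point, while neither (C1) nor (C2) are used here (they play their role elsewhere, e.g.\ in ensuring $x \in N(i,x)$ and in controlling the interaction between neighborhoods at different points). Note also that binary intersections suffice via induction, so no associativity of the magma operation is needed; only the existence of a unit is used, and only for the axiom $X \in \tau$.
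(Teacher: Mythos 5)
Your proof is correct: the verification that $\tau$ contains $\emptyset$ and $X$, is closed under arbitrary unions, and (via (C3) applied to the witness $i_1\cdot i_2$) under binary intersections is complete, and your observation that (C1) and (C2) are not needed for the literal statement is accurate. Your route differs from the paper's, which does not verify the open-set axioms at all but instead exhibits the cartesian data as a special case of the general spacial fibrous preorders of the cited earlier work and invokes the equivalence proved there, only remarking that ``a direct proof is easily obtained by showing that $N(i,x)$ is a system of open neighbourhoods.'' That alluded-to direct argument is also not quite yours: it would check the axioms for a neighbourhood system, using (C1) to get $x\in N(i,x)$ and (C2) to get that each $N(i,x)$ is a neighbourhood of each of its points (hence open in $\tau$), and then appeal to the standard passage from neighbourhood systems to topologies. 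What your leaner argument buys is a clean isolation of (C3) as the only axiom responsible for $\tau$ being a topology; what the paper's route buys is the additional fact that the sets $N(i,x)$ are themselves open and form a base of neighbourhoods of $x$, which is not part of the proposition as stated but is exactly what later proofs (e.g.\ the implications $(b)\Rightarrow(c)$ and $(d)\Rightarrow(e)$ of Theorem \ref{thm: I-cartesian spaces}) rely on. So your proof suffices for the statement, but a reader should be aware that the surrounding development tacitly uses the stronger neighbourhood-system formulation.
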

\begin{proof}
This is a special case of the equivalence betwen spacial fibrous preorders and topological spaces \cite{NMF 2014}. Having a cartesian spacial fibrous preorder $(X,(\leq^{i})_{i\in I},(\partial^{i})_{i\in I})$ we define a spacial fibrous preorder $(R,A,B,\partial,p,s,m)$ (see \cite{NMF 2014}) as:
\begin{eqnarray}
R\subseteq I\times X\times X,\quad  A=I\times X,\quad B=X
\end{eqnarray}
with $R$, $\partial \colon{R\to A}$, $p\colon{A\to B}$, $s\colon{B\to A}$ and $m\colon{A\times_B A \to A}$, defined as follows:
\begin{eqnarray*}
R=\{(i,x,y)\mid x\leq^{i} y\},\\
\partial(i,x,y)=(\partial^{i}(x,y),y),\\
p(i,x)=x,\\
s(x)=(1,x),\\
m(i,j,x)=(i\cdot j,x).
\end{eqnarray*}
Nevertheless, a direct proof is easily obtained by showing that $N(i,x)$ is a system of open neighbourhoods.
\end{proof}

It is clear that when a cartesian spacial fibrous preorder is obtained from a metric space then its induced topology is the same as the usual topology generated by the metric.

\section{A characterization of I-cartesian spaces}

The following result characterizes those topological spaces that are obtained from a cartesian spacial fibrous preorder. Such spaces will be called $I$-cartesian when $I$ is the  indexing unitary magma. Most of the time we will be considering an arbitrary unitary magma, $I$, however, it is useful from time to time to recall that our motivating example is the unitary magma of natural numbers. For that reason we will use the letters $i,j,k$ to represent elements in the set $I$ as well as the letters $n,m,k$.
%

\begin{theorem}\label{thm: I-cartesian spaces}
Let $(I,\cdot,1)$ be a unitary magma and $(X,\tau)$ a topological space. The following conditions are equivalent:
\begin{enumerate}
\item[(a)] The space $X$ is $I$-cartesian.
\item[(b)] The topology $\tau$ is determined by a cartesian spacial fibrous preorder $(X,(\leq^{i})_{i\in I},(\partial^{i})_{i\in I})$ as 
\begin{equation}\label{eq: tau (copy)}
\mathcal{O}\in \tau \Leftrightarrow \forall x\in \mathcal{O}, \exists i\in I, N(i,x)\subseteq \mathcal{O}
\end{equation}
with $N(i,x)=\{y\in X\mid x\leq^{i} y\}$.

\item[(c)] There exists a map $N\colon{I\times X\to\mathcal{P}(X)}$ such that
\begin{enumerate}
\item[(i)] $x\in N(n,x)$, for all $n\in I$, $x\in X$
\item[(ii)] $N(n,x)\subseteq\{y\in X\mid \exists k\in I, N(k,x)\subseteq N(n,x) \}$,
\item[(iii)] $N(n\cdot m,x)\subseteq N(n,x)\cap N(m,x)$, $n,m\in I$, $x\in X$ 
\end{enumerate}
for which $\tau$ is determined as \[\mathcal{O}\in \tau \Leftrightarrow \forall x\in \mathcal{O}, \exists n\in I, N(n,x)\subseteq \mathcal{O}.
\]
\item[(d)] There exists a ternary relation $R\subseteq I\times X\times X$ together with a map $p\colon{R\to I}$ such that
\begin{enumerate}
\item[(i)] $(n,x,x)\in R$, for all $n\in I$, $x\in X$
\item[(ii)] if $(n,x,y)\in R$ and $(p(n,x,y),y,z)\in R$ then $(n,x,z)\in R$
\item[(iii)] if $(n\cdot m,x,y)\in R$ then $(n,x,y)\in R$ and $(m,x,y)\in R$
\end{enumerate}
for which $\tau$ is determined as 
\begin{equation*} 
\mathcal{O}\in \tau \Leftrightarrow \forall x\in \mathcal{O}, \exists n\in I, N_R(n,x)\subseteq \mathcal{O}
\end{equation*}
 with $N_R(n,x)=\{y\in X\mid (n,x,y)\in R\}$.

\item[(e)] There are maps $\eta\colon{I\times X\to \tau}$ and $\gamma\colon{\{(U,x)\mid x\in U\in \tau\}\to I}$ such that:
\begin{enumerate}
\item[(i)] $x\in \eta(n,x)$ for every $n\in I$ and $x\in X$,
\item[(ii)] $\eta(\gamma(U,x),x)\subseteq U$, for all $x\in U\in \tau$,
\item[(iii)] $\eta(n\cdot m,x)\subseteq \eta(n,x)\cap \eta(m,x)$, for all $n,m\in I$, and $x\in X$.
\end{enumerate}
\end{enumerate}
\end{theorem}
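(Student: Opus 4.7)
The plan is to prove a cycle of equivalences (a)$\Leftrightarrow$(b)$\Leftrightarrow$(d)$\Leftrightarrow$(c)$\Leftrightarrow$(e), with each step being a light repackaging of the same underlying data. The first equivalence (a)$\Leftrightarrow$(b) is essentially a definition chase: an $I$-cartesian space is by stipulation one whose topology is presented as in (b), via Proposition~\ref{thm: fibrous to spaces}.

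For (b)$\Leftrightarrow$(d) I will write down a bijective dictionary between the two kinds of data, namely
\[(i,x,y)\in R \;\Longleftrightarrow\; x\leq^i y,\qquad p(i,x,y):=\partial^i(x,y),\]
under which (C1), (C2), (C3) of Definition~\ref{def: cartesian spacial fibrous preorder} translate symbol-for-symbol into (d)(i), (d)(ii), (d)(iii); the neighborhood sets $N(i,x)$ and $N_R(i,x)$ coincide, so the induced topologies agree. For (d)$\Leftrightarrow$(c) I then take $N(n,x):=N_R(n,x)$: conditions (c)(i) and (c)(iii) follow directly from (d)(i) and (d)(iii), while (c)(ii)---read with the topologically meaningful inclusion $N(k,y)\subseteq N(n,x)$, since the printed version $N(k,x)\subseteq N(n,x)$ is trivially witnessed by $k=n$---is exactly (d)(ii) with $k:=p(n,x,y)$. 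In the reverse direction, from $N$ as in (c), define $R:=\{(n,x,y)\mid y\in N(n,x)\}$ and use the axiom of choice to pick, for each $(n,x,y)\in R$, a witness $k$ provided by (c)(ii), setting $p(n,x,y):=k$.

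For (c)$\Leftrightarrow$(e) the pivotal observation is that any $N$ satisfying (c) already takes its values in the topology $\tau$ it generates: condition (c)(ii) says every $y\in N(n,x)$ admits some $N(k,y)\subseteq N(n,x)$, which is precisely the criterion for $N(n,x)\in\tau$. I can therefore set $\eta:=N$ and define $\gamma(U,x)$ (again by choice) to be any $n$ with $N(n,x)\subseteq U$, which exists because $U\in\tau$. Conversely, from $\eta,\gamma$ as in (e) I put $N(n,x):=\eta(n,x)$; then (c)(i) and (c)(iii) are inherited from (e)(i) and (e)(iii), and (c)(ii) follows by applying (e)(ii) to the open set $U=\eta(n,x)$ at each of its points, with $k:=\gamma(\eta(n,x),y)$.

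The main obstacle is not depth but bookkeeping: one must (i) parse (c)(ii) so that the quantified ``$k$'' is allowed to depend on $y$ and the inclusion involves $N(k,y)$, and (ii) flag the mild appeals to the axiom of choice needed to promote the existence statements in (c) and in the defining clause of $\tau$ into the explicit selection maps $p$ in (d) and $\gamma$ in (e). Once these points are pinned down, every remaining verification is a direct unpacking of one defining formula into another.
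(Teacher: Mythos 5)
Your proposal is essentially correct and follows the same strategy as the paper, namely a chain of translations between equivalent packagings of the same data; it differs only in the ordering of the links (the paper runs the one-way cycle $(a)\Leftrightarrow(b)$, $(b)\Rightarrow(c)\Rightarrow(d)\Rightarrow(e)\Rightarrow(b)$, whereas you establish $(b)\Leftrightarrow(d)\Leftrightarrow(c)\Leftrightarrow(e)$ with each link bidirectional). Your direct dictionary $(b)\Leftrightarrow(d)$ is cleaner than the paper's detour through $(c)$, and your reading of $(c)(ii)$ with $N(k,y)\subseteq N(n,x)$ is the right one: as printed the condition is vacuous (witnessed by $k=n$), and both the paper's own proof and its displayed axioms for natural spaces confirm that $N(k,y)$ is intended. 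Your observation that $(c)(ii)$ is precisely the statement that each $N(n,x)$ is open in the topology it generates is also the paper's key step in producing $\eta$.

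The one point you gloss over is in the direction $(e)\Rightarrow(c)$ (equivalently, in closing the loop back to $(b)$): condition $(c)$ consists not only of the three axioms (i)--(iii) but also of the clause that $\tau$ is \emph{determined} by $N$, and this does not transfer for free when you set $N:=\eta$. You must check both implications of $\mathcal{O}\in\tau \Leftrightarrow \forall x\in\mathcal{O}\,\exists n\in I,\ \eta(n,x)\subseteq\mathcal{O}$. The forward implication is immediate from $\gamma$ via $(e)(ii)$, but the backward one requires writing $\mathcal{O}=\bigcup_{x\in\mathcal{O}}\eta(n_x,x)$ as a union of sets that are open by hypothesis (since $\eta$ takes values in $\tau$) and contain their centres by $(e)(i)$, and then invoking closure of $\tau$ under arbitrary unions. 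This is exactly the final paragraph of the paper's proof and the only place where the topology axioms are genuinely used rather than merely transcribed; your blanket claim that every remaining verification is a direct unpacking of one defining formula into another skips it. Once this short verification is added, the argument is complete.
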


\begin{proof}
Conditions $(a)$ and $(b)$ are equivalent by definition. 

To prove $(b)$ implies $(c)$ we start with a cartesian spacial fibrous preorder and define $N(n,x)=\{y\in X\mid x\leq^n y\}$. Conditions $(c)(i)$ and $(c)(iii)$ follow respectively from axioms $(C1)$ and $(C3)$. To prove $(c)(ii)$ we start with $y\in N(n,x)$, that is, $x\leq^n y$, and observe that there exists $k=\partial^n(x,y)\in I$ such that $N(k,y)\subseteq N(n,x)$. Indeed, if $z\in N(k,y)$, that is, $y\leq^k z$, then by $(C2)$ we have $x\leq^n z$, which is the same as saying $z\in N(n,x)$. This shows that the map $N\colon{I\times X\to\mathcal{P}(X)}$ satisfies $(c)(ii)$. It remains to show that $\tau$ is determined by it. This follows from Proposition \ref{thm: fibrous to spaces} and the assumption that $\tau$ is determined as in equation $(\ref{eq: tau (copy)})$.

In order to prove $(c)$ implies $(d)$ we define $$R=\{(n,x,y)\in I\times X\times X\mid y\in N(n,x)\}$$ and put $p(n,x,y)=k$ for some $k\in I$ such that $N(k,y)\subseteq N(n,x)$, which exists by assumption on condition $(c)(ii)$. Once again, conditions $(d)(i)$ and $(d)(iii)$ are direct consequences of $(c)(i)$ and $(c)(iii)$, respectively. To see that $(d)(ii)$ is satisfied we observe that if $y\in N(n,x)$ and $z\in N(p(n,x,y),y)$, then, by definition of $p(n,x,y)$, we have $N(k,y)\subseteq N(n,x)$. It follows $z\in N(n,x)$ and hence $(d)(ii)$ is satisfied. Having $R$ we define $N_R=\{y\mid (n,x,y)\in R\}=\{y\in N(n,x)\}=N(n,x)$ and so the topology $\tau$ is obtained by $N_R=N$.

To prove $(d)$ implies $(e)$, define $\eta(i,x)=\{y\in X\mid (n,x,y)\in R\}=N_R(n,x)$ and put \[\gamma((U,x))=k\] for some $k\in I$ such that $N_R(k,x)\subseteq U$, which exists by the assumption that $\tau$ is generated by $N_R$. The map $\eta\colon{I\times X\to\tau}$ is well defined because each $\eta(n,x)\in \tau$. Indeed, if $y\in \eta(n,x)$, that is $(n,x,y)\in R$, then there exists $k=p(n,x,y)\in I$ for which $N_R(k,y)\subseteq N_R(n,x)$. This is a consequence of $(d)(ii)$. If $z\in N_R(k,y)\Leftrightarrow (k,y,z)\in R$ then, given that $(n,x,y)\in R$ and $k=p(n,x,y)$, we have $(n,x,y)\in R$ or, in other words, $z\in N_R(n,x)$. This shows that each $\eta(n,x)$ is open in $\tau$. Conditions $(e)(i)$ and $(e)(iii)$ are direct consequence of $(d)(i)$ and $(d)(iii)$, respectively. To show $(e)(ii)$ we observe that if $y\in\eta(\gamma(U,x),x)$ then $(\gamma(U,x),x,y)\in R$, but $\gamma(U,x)=k$, for some $k\in I$ such that $N_R(k,x)\subseteq U$. Since $y\in N_R(k,x)\Leftrightarrow(k,x,y)\in R$, we conclude that $y\in U$. This shows that $\eta(\gamma(U,x),x)\subseteq U$.

Finally, we prove $(e)$ implies $(b)$. Having $\eta$ and $\gamma$ it is not difficult to see that a cartesian spacial fibrous preorder is obtained if we let
\begin{eqnarray*}
x\leq^{i}y \Leftrightarrow y\in \eta(i,x)\\
\partial^{i}(x,y)=\gamma(\eta(i,x),y)
\end{eqnarray*}
with $i\in I$ and $x,y\in X$. Indeed, axioms $(C1)$ and $(C3)$ follow respectively from $(e)(i)$ and $(e)(iii)$. For $(C2)$ let us suppose $x\leq^i y$, that is, $y\in\eta(i,x)$, and let us suppose $y\leq^k z$ with $k=\partial^i(x,y)=\gamma(\eta(i,x,y))$. This means $z\in\eta(k,y)$ and by condition $(e)(ii)$ we have $\eta(\gamma(\eta(i,x),y),y)\subseteq \eta(n,x)$, thus we have $z\in \eta(i,x)$, or $x\leq ^i z$ as desired.

It remains to show that the topology $\tau$ is recovered as prescribed in (\ref{eq: tau (copy)}). On the one hand if $\mathcal{O}\in \tau$ and if $x\in \mathcal{O}$ then there is $k=\gamma(\mathcal{O},x)$ with $N(k,x)\subseteq \mathcal{O}$. This means that every open set in $\tau$ is generated as in condition (\ref{eq: tau (copy)}). To see the converse let us consider any subset $\mathcal{O}\subseteq X$ and suppose it has the property that for all $x\in \mathcal{O}$ there is some $k=k(x)\in I$ with $N(k,x)=\eta(k,x)\subseteq \mathcal{O}$. We have to show that $\mathcal{O}\in \tau$. This follows because \[\mathcal{O}=\bigcup_{x\in \mathcal{O}}N(k(x),x)\] and  every $N(k,x)=\eta(k,x)\in \tau$.
\end{proof}


We immediately observe some interesting special cases, namely when $I=\{1\}$ is the trivial unitary magma, or when $I=(\mathbb{N},\cdot,1)$ is the unitary magma (monoid) of natural numbers with the usual multiplication. Furthermore, as we will see in Section \ref{monoids and their topologies}, the map $p\colon{R\to I}$ of condition $(d)$ in the previous result can sometimes be decomposed as $p(n,x,y)=\beta(x,y)\cdot n$ in which $\beta(x,y)=\gamma(\eta(1,x),y)$ with $\eta$ and $\gamma$ as in condition $(e)$ above.

In a sequel to this work our attention will be turned to morphisms between fibrous preorders and on how they can be defined internally to any category with finite limits. 

 For the moment, let us briefly mention that a morphism between cartesian spacial fibrous preorders, say from $$(X,(\leq^{i})_{i\in I},(\partial^{i})_{i\in I})$$ to $$(Y,(\leq^{i})_{i\in I},(\partial^{i})_{i\in I}),$$ consists of a map $f\colon{X\to Y}$ and a family of maps $(g_j\colon{X\to I})_{j\in I}$ such that
\begin{equation}\label{eq: fibrous morphism}
x\leq^{g_j(x)} y \Rightarrow f(x)\leq^{j} f(y)
\end{equation}
for all $x,y\in X$ and $j\in I$. Note that when each $g_j$ can be chosen independently from $x$ then the family $(g_j)_{j\in I}$ may be seen as a single map $g\colon{I\to I}$ and this is comparable to the notion of uniform continuity (see the example of metric spaces presented above). It is also clear that morphisms between cartesian spacial fibrous preorders may be considered for different indexing unitary magmas. This and other considerations will be explored in a sequel to this work.

%

 So far we have isolated one basic ingredient in the structure of those topological spaces which are obtained as cartesian spacial fibrous preorders: a unitary magma. Note that every ascending chain on a set $I$ makes it a unitary magma with the smallest element as neutral and as binary operation the procedure of choosing the greatest between any two. 
 
 We now ask: Besides a unitary magma, is it possible that the structure of a system $(X,(\leq^{i})_{i\in I},(\partial^{i})_{i\in I})$, inducing a topology on $X$, can further be decomposed into simpler structures which are hence easier to analyse?

The answer to this question, as we will see, is yes provided only that we restrict its generality a little bit further. Nevertheless, we are still able to capture the majority of examples of which we are interested in, namely preorders, metric spaces and certain special classes of groups and monoids with a topology (see Sections \ref{the structure of metric spaces} and \ref{monoids and their topologies}). First, let us analyse the concrete examples.

\section{Some examples of cartesian spacial fibrous preorders}

Let us start with two simple cases of interest as unitary magmas. The trivial (singleton) unitary magma, and the unitary magma of natural numbers with usual multiplication as its binary operation.

 It is clear that a cartesian spacial fibrous preorder, indexed by a singleton unitary magma $I=\{1\}$, is nothing but a preorder. Indeed, in that case we always have $\partial^{i}(x,y)=i$ with $i=1$ and hence condition (C2) becomes transitivity. Condition (C1) asserts reflexivity while condition (C3) is trivial.
 
 \begin{proposition}\label{thm: preorders and 1-spaces}
The category of preorders is equivalent to the category of $1$-cartesian topological spaces which are the same as Aleksandrov or discrete spaces. 
 \end{proposition}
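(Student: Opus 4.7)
The plan is to lean on the observation made just before the statement: when $I=\{1\}$ is a singleton, $\partial^{1}(x,y)$ is forced to equal $1$, so axiom (C2) collapses to transitivity of $\leq^{1}$, (C1) is reflexivity, and (C3) is vacuous. Hence a $1$-cartesian spacial fibrous preorder on $X$ \emph{is} a preorder on $X$, and by Theorem \ref{thm: I-cartesian spaces} what remains is to verify three things: the topology induced by a preorder via (\ref{eq: tau}) is its Aleksandrov (``discrete'', in Aleksandrov's original terminology) topology; conversely every Aleksandrov topology arises this way; and the morphisms match up.

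For the first point, specializing (\ref{eq: tau}) to $I=\{1\}$ says $\mathcal{O}\in\tau$ iff $\mathcal{O}$ is upward closed under $\leq^{1}$, so the open sets are exactly the up-sets of the preorder. In particular $N(1,x)=\{y\mid x\leq^{1}y\}$ is the smallest open set containing $x$, which identifies $\tau$ as an Aleksandrov topology in the usual sense (arbitrary intersections of open sets remain open).

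For the converse, given an Aleksandrov space $(X,\tau)$ define $x\leq y$ iff $y$ belongs to the smallest open neighbourhood of $x$, equivalently, iff $y$ lies in every open set containing $x$; reflexivity and transitivity are immediate. A subset $\mathcal{O}\subseteq X$ is then open for $\tau$ exactly when it contains the smallest neighbourhood of each of its points, and this is precisely condition (\ref{eq: tau (copy)}) with $N(1,x)=\{y\mid x\leq y\}$. The two constructions are mutually inverse on the nose, so the object assignments furnish a bijection between preorders on $X$ and Aleksandrov topologies on $X$.

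For the morphism side, specializing (\ref{eq: fibrous morphism}) to $I=\{1\}$ leaves only the requirement $x\leq y \Rightarrow f(x)\leq f(y)$, i.e., $f$ is monotone; and monotone maps between preorders are exactly the continuous maps between the associated Aleksandrov topologies, since the preimage of an up-set under a monotone map is again an up-set and, conversely, continuity forces $f$ to respect the minimal open neighbourhoods. There is no serious obstacle; the only point requiring mild care is verifying in the second step that an Aleksandrov topology indeed satisfies clauses (c)(i)--(iii) of Theorem \ref{thm: I-cartesian spaces} with $I=\{1\}$, which reduces to the existence of a smallest open neighbourhood at each point, together with the tautological role of condition (iii) when the magma is trivial.
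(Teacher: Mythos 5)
Your argument is correct and follows essentially the same route as the paper: identify $1$-cartesian spacial fibrous preorders with preorders, invoke Theorem \ref{thm: I-cartesian spaces}, and match the resulting topologies with Aleksandrov spaces and monotone maps with continuous maps via condition (\ref{eq: fibrous morphism}). The only difference is that the paper delegates the preorder/Aleksandrov correspondence to the classical reference \cite{Aleksandrov 1937}, whereas you verify it directly by identifying $N(1,x)$ with the smallest open neighbourhood of $x$.
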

\begin{proof}
Preorders are precisely cartesian spacial fibrous preorders indexed by a singleton unitary magma. Moreover, if $I=\{1\}$ then condition $(\ref{eq: fibrous morphism})$ asserts the monotonicity of the map $f$. The result follows from Theorem \ref{thm: I-cartesian spaces}. It is well-known that Aleksandrov spaces, or discrete spaces, are the same as preorders \cite{Aleksandrov 1937} (see also \cite{NMF 2014}).
\end{proof}

When $I=(\mathbb{N},\cdot,1)$ is the unitary magma of natural numbers with usual multiplication then we have several interesting cases.

A natural space \cite{NMF 2014} is essentially a first-countable topological space. It consists of a pair $(X,N)$ with $X$ a set and $N\colon{\mathbb{N}\times X\to \mathcal{P}(X)}$ a map into the power-set of $X$ satisfying the following three conditions for all $x\in X$, and $i,j\in \mathbb{N}$:
\begin{eqnarray}
x\in N(i,x)\label{eq:natural spaces 1}\\
N(i,x)\subseteq \{y\in X\mid \exists j\in \mathbb{N}, N(j,y)\subseteq N(i,x)\}\label{eq:natural spaces 2}\\
N(ij,x)\subseteq N(i,x)\cap N(j,x).\label{eq:natural spaces 3}
\end{eqnarray}

Natural spaces are precisely $\mathbb{N}$-cartesian spaces (see item $(c)$ of Theorem $\ref{thm: I-cartesian spaces}$).

In a similar manner, every metric space $(X,d)$ gives rise to a cartesian spatial fibrous preorder indexed by the natural numbers as already presented. Put $x\leq^{n} y$ whenever $d(x,y)< \frac{1}{n}$ and define $\partial^{n}(x,y)=k$ such that $\frac{1}{k}\leq \frac{1}{n}-d(x,y)$.

Similarly, every normed vector space gives rise to a cartesian spatial fibrous preorder indexed by the natural numbers. We simply put $x\leq^{n} y$ whenever $\|y-x\|< \frac{1}{n}$ and define $\partial^{n}(x,y)=k\in \mathbb{N}$ as any natural number such that $\frac{1}{k}\leq \frac{1}{n}-\|y-x\|$. However, if reformulated as:
\begin{eqnarray}
x\leq^{n} y\Leftrightarrow n\|y-x\|<1
\end{eqnarray}
and $\partial^{n}(x,y)=rn$ with $r\in \mathbb{N}$ any natural number such that for all $u\in X$
\begin{eqnarray}
r\|u\|<1\Rightarrow \|u+n(y-x)\|<1,
\end{eqnarray}
then we observe that the needed topological information is reduced to:
\begin{enumerate}
\item the open ball, $B_1$, of radius 1 and centred at the origin;
\item  a choice of a natural number $r=r(z)$, for every $z\in B_1$, such that for all $u\in X$
\begin{eqnarray}
r\|u\|<1\Rightarrow \|u+z\|<1.
\end{eqnarray}
\end{enumerate}

This is sufficient motivation to consider those topological groups $(X,0,+,\tau)$ for which there exists an open neighbourhood of the origin, say $B\subseteq X$, satisfying the following  condition:
\begin{eqnarray}
\forall x\in B, \exists n\in \mathbb{N}, B_n+x\subseteq B,
\end{eqnarray} 
which, in other words, if defining $B_n=\{u\in X\mid nu\in B\}$, can be stated as
\begin{eqnarray}
\forall x\in B, \exists n\in \mathbb{N},\forall u\in X, nu\in B\Rightarrow u+x\in B.
\end{eqnarray} 

We observe, furthermore, that there is no need to start with a topological group. Any group with a subset $B$, containing the origin and satisfying the previous condition, immediately satisfies conditions (C1) and (C2) in the definition of a cartesian spacial fibrous preorder as soon as we put 
$x\leq^{i} y$ whenever $n(y-x)\in B$ and define 
 $\partial^{n}(x,y)=rn$ with $r\in \mathbb{N}$ any natural number such that 
\begin{eqnarray}\label{eq: group with open neighbourhood of the origin}
\forall u\in X, ru\in B\Rightarrow u+n(y-x)\in B.
\end{eqnarray} 

Indeed, for every $n\in \mathbb{N}$, we get $x\leq^{n} x$ if and only if $n(x-x)\in B$ or equivalently $0\in B$. Now, if $x\leq^{n} y$ and $y\leq^{rn} z$ with $r\in \mathbb{N}$ such that $(\ref{eq: group with open neighbourhood of the origin})$ holds, then as a consequence we have $x\leq^{n} z$. To see it let us take any $n(y-x)\in B$ and $rn(z-y)\in B$, then, by taking $u=n(z-y)\in X$, we observe $ru\in B$ and hence $u+n(y-x)\in B$ or equivalently $n(z-y)+n(y-x)\in B$ which is the same as $n(z-x)\in B$, precisely stating that $x\leq^{n} z$.
 
In order to prove condition (C3) we need an extra assumption on the subset $B$, namely that $B_n\subseteq B$ for every $n\in \mathbb{N}$. This is clearly the case for normed vector spaces and the previous considerations can be summarized as follows.

\begin{proposition}\label{thm: a group with a subset gives a fibrous etc}
Let $(X,+,0)$ be a group and suppose there are $B\subseteq X$ and $\alpha\colon{B\to\mathbb{N}}$ satisfying for every $x\in B$, $u\in X$ and $n\in \mathbb{N}$ the following three conditions:
\begin{enumerate}
\item[(i)] $0\in B$;
\item[(ii)]  if $\alpha(x)u\in B$ then  $u+x\in B$
\item[(iii)] if $nu\in B$ then $u\in B$
\end{enumerate} 
Then, the system $(X,\leq^{n},\partial^{n})$, defined as $x\leq^{n} y$ whenever $n(y-x)\in B$ and 
 $\partial^{n}(x,y)=\alpha(n(y-x))n$, is a cartesian spacial fibrous preorder indexed by the natural numbers. Thus giving rise to an $\mathbb{N}$-cartesian space on the set $X$.
\end{proposition}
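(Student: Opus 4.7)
The plan is to verify the three axioms (C1), (C2), (C3) of Definition \ref{def: cartesian spacial fibrous preorder} directly from the assumptions (i), (ii), (iii), and then invoke Proposition \ref{thm: fibrous to spaces} to obtain the $\mathbb{N}$-cartesian space structure on $X$. The unitary magma here is $(\mathbb{N},\cdot,1)$ already used above.

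First I would dispatch (C1): for every $n\in\mathbb{N}$ and $x\in X$ one has $n(x-x)=0$, which lies in $B$ by hypothesis (i), so $x\leq^{n}x$. Next, for (C2), suppose $x\leq^{n}y$, so that $v:=n(y-x)\in B$, and set $j=\partial^{n}(x,y)=\alpha(v)\,n$. Assuming $y\leq^{j}z$ means $j(z-y)=\alpha(v)\cdot n(z-y)\in B$; writing $u=n(z-y)$, this is exactly $\alpha(v)\,u\in B$. Applying (ii) with the element $v$ in the role of $x$ (note that $v\in B$, so $\alpha(v)$ is defined) gives $u+v\in B$, i.e.\ $n(z-y)+n(y-x)=n(z-x)\in B$, hence $x\leq^{n}z$. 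For (C3), suppose $x\leq^{nm}y$, so $nm(y-x)\in B$. Since $nm(y-x)=n\bigl(m(y-x)\bigr)$, hypothesis (iii) applied with $u=m(y-x)$ yields $m(y-x)\in B$, that is, $x\leq^{m}y$; the symmetric computation $nm(y-x)=m\bigl(n(y-x)\bigr)$ and another application of (iii) give $x\leq^{n}y$.

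Having checked (C1)--(C3), the system $(X,(\leq^{n})_{n\in\mathbb{N}},(\partial^{n})_{n\in\mathbb{N}})$ is a cartesian spacial fibrous preorder indexed by $(\mathbb{N},\cdot,1)$. Proposition \ref{thm: fibrous to spaces} then produces the induced topology on $X$, yielding an $\mathbb{N}$-cartesian space as claimed.

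The only slightly delicate point is the bookkeeping in (C2), where the condition (ii) must be applied to the element $v=n(y-x)$ of $B$ (not to $x$ itself), together with the addend $u=n(z-y)$; one has to be careful that the identity $n(z-y)+n(y-x)=n(z-x)$ is being used and that $\alpha$ is evaluated at the correct element. Everything else is a direct rewriting of the hypotheses, so I do not anticipate any serious obstacle.
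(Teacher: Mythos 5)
Your verification of (C1)--(C3) is correct and coincides with the argument the paper gives in the discussion immediately preceding the proposition (taking $r=\alpha(n(y-x))$ in (C2) and using hypothesis (iii) twice for (C3)), followed by the same appeal to Proposition \ref{thm: fibrous to spaces}. The one identity you rightly flag as delicate, $n(z-y)+n(y-x)=n(z-x)$, implicitly uses commutativity of $+$, but the paper relies on it in exactly the same way, so your proof matches the intended one.
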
 

In the following section we will analyse in more detail the example of a metric space. The example of a normed vector space, which has been generalized by the previous proposition, will be analysed further in Section \ref{monoids and their topologies}.
 
\section{The structure of metric spaces}\label{the structure of metric spaces}

In this section we will see how to decompose the structure of a metric space in terms of a cartesian spacial fibrous preorder. First, instead of the non-negative real interval $[0,+\infty[$, we can take any preorder $(E,\leq)$. Secondly, instead of the formula $\frac{1}{n}-d(x,y)$ we will use maps $p(\alpha,x,y)=\alpha-d(x,y)$ and $g(n)=\frac{1}{n}$ so that $\frac{1}{n}-d(x,y)$ is obtained by taking $p(g(n),x,y)$. These maps take their values on the preorder $(E,\leq)$ and are required to satisfy some conditions. The definition of $x\leq^n y$ is recovered as $g(k)\leq p(g(n),x,y)$ for some $k\in I$, which is then used to define $\partial^n(x,y)=k$.

Let $(E,\leq)$ be a preorder and $B$ a set. We will say that a map $p\colon{E\times B\times B\to E}$ is a \emph{lax-left-associative Mal'tsev operation} when the following three conditions are satisfied for all $a,b\in E$ and $x,y,z\in B$:
\begin{enumerate}
\item $a\leq p(a,x,x)$,
\item $p(p(a,x,y),y,z)\leq p(a,x,z)$,
\item if $a\leq b$ then $p(a,x,y)\leq p(b,x,y)$.
\end{enumerate}
 
Let $(I,\cdot,1)$ be a unitary magma and $(E,\leq)$ a preorder. We will say that a map $g\colon{I\to E}$ is a \emph{linking map} if \[g(n\cdot(k\cdot m))\leq g(n\cdot m)\] for all $n,m,k\in I$.
 
\begin{proposition}\label{thm: metric spaces} Let $(I,\cdot,1)$ be a unitary magma and $(E,\leq)$ a preorder. Every lax-left-associative Mal'tsev operation $p\colon{E\times B\times B\to E}$ together with a linking map $g\colon{I\to E}$ induces a topology $\tau$ on the set $B$ determined by
\[\mathcal{O}\in \tau \Leftrightarrow \forall x\in \mathcal{O}, \exists n\in I, N(n,x)\subseteq \mathcal{O}
\]
with $N(n,x)=\{y\in B\mid \exists m\in I, g(m)\leq p(g(n),x,y)\}$.
\end{proposition}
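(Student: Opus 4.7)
The plan is to show that the data $(p,g)$ produces a cartesian spacial fibrous preorder indexed by $(I,\cdot,1)$ on the set $B$, whose induced topology (via Proposition~\ref{thm: fibrous to spaces}) coincides with $\tau$. Concretely, I would define
\[
x\leq^{n}y \quad \Longleftrightarrow \quad \exists m\in I,\ g(m)\leq p(g(n),x,y),
\]
and let $\partial^{n}(x,y)$ be any such witness $m$. Since $N(n,x)$ as defined in the statement is then exactly $\{y\in B\mid x\leq^{n}y\}$, once (C1)--(C3) are verified, the claim follows directly from Proposition~\ref{thm: fibrous to spaces}.

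The verification proceeds as follows. For (C1), property~(1) of a lax-left-associative Mal'tsev operation gives $g(n)\leq p(g(n),x,x)$, so $m:=n$ is a witness. For (C2), assume $x\leq^{n}y$ with witness $k=\partial^{n}(x,y)$, so $g(k)\leq p(g(n),x,y)$, and assume $y\leq^{k}z$ with witness $m$, so $g(m)\leq p(g(k),y,z)$. Using monotonicity~(3) of $p$ in its first variable applied to $g(k)\leq p(g(n),x,y)$, one obtains $p(g(k),y,z)\leq p(p(g(n),x,y),y,z)$, and property~(2) then yields $p(p(g(n),x,y),y,z)\leq p(g(n),x,z)$. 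Chaining the inequalities gives $g(m)\leq p(g(n),x,z)$, which is precisely $x\leq^{n}z$.

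The step that requires the linking map is (C3). Here one must extract from the definition of $g$ the two inequalities
\[
g(n\cdot m)\leq g(n)\quad\text{and}\quad g(n\cdot m)\leq g(m),
\]
obtained by specialising the linking condition $g(n\cdot(k\cdot m))\leq g(n\cdot m)$ at $m=1$ (giving $g(n\cdot k)\leq g(n)$, renamed) and at $n=1$ (giving $g(k\cdot m)\leq g(m)$). Given $x\leq^{n\cdot m}y$ with $g(k)\leq p(g(n\cdot m),x,y)$, monotonicity~(3) of $p$ applied to the two inequalities above produces $p(g(n\cdot m),x,y)\leq p(g(n),x,y)$ and $p(g(n\cdot m),x,y)\leq p(g(m),x,y)$, so $k$ witnesses both $x\leq^{n}y$ and $x\leq^{m}y$.

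I expect this final step --- recognising that the one-sided linking condition suffices to split $g(n\cdot m)$ against both $g(n)$ and $g(m)$ --- to be the only genuinely nontrivial point; the rest is a direct translation between the Mal'tsev-style axioms on $p$ and the fibrous axioms (C1)--(C2), after which Proposition~\ref{thm: fibrous to spaces} closes the argument.
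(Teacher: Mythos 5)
Your proposal is correct and follows essentially the same route as the paper: define $x\leq^{n}y$ via the existence of a witness $m$ with $g(m)\leq p(g(n),x,y)$, verify (C1) from axiom (1), (C2) by chaining monotonicity with the lax left associativity, and (C3) by specialising the linking condition at $n=1$ and $m=1$, then invoke Proposition~\ref{thm: fibrous to spaces}. The paper's proof is the same argument in the same order, so there is nothing to add.
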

\begin{proof}
The proof makes use of Proposition \ref{thm: fibrous to spaces} by showing that the system $(B,\leq^{n},\partial^{n})$ is a cartesian spacial fibrous preorder with \[x\leq^{n}y \Leftrightarrow  \exists m\in I, g(m)\leq p(g(n),x,y)\] 
and $\partial^n(x,y)=m$.

Axiom (C1) holds because $g(n)\leq p(g(n),x,x)$ for all $n\in I$ and $x\in B$.

In order to prove Axiom (C2) we consider $x\leq^n y$ with $\partial^n(x,y)=m$ such that $g(m)\leq p(g(n),x,y)$ and $y\leq^m z$ for which there exists $m'=\partial^m (y,z)\in I$ such that $$g(m')\leq p(g(m),y,z).$$
Under these assumptions, having in mind that $p$ is a lax-left-associative Mal'tsev operation, we observe
\[g(m')\leq p(g(m),y,z)\leq p(p(g(n),x,y),y,z)\leq p(g(n),x,z)\]
which shows that $x\leq^n z$.

Axiom (C3) is a consequence of $g$ being a linking map from which, in particular, we obtain
\[g(k\cdot m)\leq g(m)\]
and
\[g(n\cdot k)\leq g(n).\]
In the former case $n=1$ while in the latter $m=1$. This allows us to conclude that if $x\leq^{n\cdot k}y$ then $x\leq^n y$. Indeed, if there exists $m\in I$ such that $g(m)\leq p(g(n\cdot k),x,y)$ then
\[p(g(n\cdot k),x,y)\leq p(g(n),x,y)\]
and so $g(m)\leq p(g(n),x,y)$, thus ensuring $x\leq^n y$. Similarly we prove that if $x\leq^{k\cdot m}y$ then $x\leq^m y$.
\end{proof} 
 
We remark that the reflexivity of the preorder $(E,\leq)$ is never used, so that the previous result still holds for a set $E$ equipped with a transitive relation. So, for example, the above result holds true if we replace the preordered set $(E,\leq)$ by a semigroup $(E,\circ)$ and define the transitive relation $x<y$ as $x\circ y=x$. As it is well known this relation fails to be reflexive when $E$ is not an idempotent semigroup and fails to be anti-symmetric when $E$ is not commutative. In addition, the requirement on the map $g$ being a linking map could be replaced by the condition that $g(n\cdot m)\leq g(n)$ {and} $g(n\cdot m)\leq g(m)$.

 The example of a metric space is obtained by letting $E$ be the set of real numbers with the usual order, and taking the maps $p(a,x,y)=a-d(x,y)$ and $g(n)=\frac{1}{n}$ with $I$ the unitary magma of natural numbers with usual multiplication.

 Another interesting example is obtained by taking  $E$ to be an ordered group $(E,+,0,\leq)$ and $I=(I,\cdot,1)$ a monoid. Suppose there exists a map $g\colon{I\to E}$ with $g(mkn)\leq g(mn)$ for all $m,n,k\in I$. Let $B$ be any set and consider a map $$\delta\colon{B\times B\to E}$$ such that 
 \begin{eqnarray}
 \delta(x,x)=0, \quad \text{for all $x\in B$}\\
 \delta(x,y)+\delta(y,z)\geq \delta(x,z),\quad \text{for all $x,y,z\in B$.}
 \end{eqnarray}
 This is a straightforward generalization of a metric space and we have that $p(a,x,y)=a-\delta(x,y)$ is a lax-left-associative Mal'tsev operation.
 
 Let us now suppose that $B$ is a group. Then, with $g$ and $E$ as before, for every map $t\colon{B\to E}$ such that $ t(0)=0$ and
\begin{eqnarray}
 t(u)+t(v)\geq t(u+v),\quad \text{for all $u,v\in B$}
 \end{eqnarray}
 we get a lax-left-associative Mal'tsev operation with $p(a,x,y)=a-t(y-x)$. This is an immediate generalization for normed spaces. In particular, when $t$ is a homomorphism, we get $p(a,x,y)=a+t(x)-t(y)$.
  
A simple procedure to construct unitary magmas which in general are not associative is to start with an arbitrary non-empty set of indexes $I$, choose and element $1\in I$ and consider a family of endo-maps $\mu_n\colon{I\to I}$, indexed by the elements in $I$, such that $\mu_n(1)=1$ and $\mu_1(n)=n$ for all $n\in I$. A binary operation is thus obtained as $m\cdot n=\mu_m(n)$.

One final remark on notation. The notion of a linking map comes from the structure of a link which is part of work in progress. The name lax-left-associative Mal'tsev operation is due to the fact that when $E=B$ and the preorder is the identity or discrete order, then, a lax-left-associative Mal'tsev operation reduces to   
\begin{eqnarray*}
a= p(a,x,x)\\
p(p(a,x,y),y,z)= p(a,x,z).
\end{eqnarray*}
If adding the respective two similar identities on the right $ p(x,x,a)=a$ and $p(z,x,a)=p(z,y,p(y,x,a)$, which make sense because $E=B$, then we would obtain an associative Mal'tsev operation.

\section{Monoids and modules as cartesian spaces}\label{monoids and their topologies}

An interesting special class of $I$-cartesian spaces is obtained by imposing on the structuring maps $\gamma$ and $\eta$ of Theorem \ref{thm: I-cartesian spaces}(e) the condition
\[\gamma(\eta(n,x),y)=\gamma(\eta(1,x),y)\cdot n\]
for all $y\in \eta(n,x)$ and for all $n\in I$ and $x\in X$.

We will analyse this condition by considering a monoid structure on the set $B$. This will allow us to decompose $\eta(n,x)=x+\eta(n,0)$. We then consider an $I$-module structure on $B$ thus providing a way to obtain $\eta(n,0)$ as an $n$-scaling of $\eta(1,0)$. Therefore, when $B$ is an $I$-module, all the information is encompassed in $\eta(1,0)$ and $\gamma(\eta(1,0),z)$.

\begin{proposition}
Let $(I,\cdot,1)$ be a unitary magma. Every monoid $(B,+,0)$, equipped with a family of subsets $S_n\subseteq B$ together with maps
\[\alpha_n\colon{S_n\to I,\quad n\in I}\]
such that
\begin{enumerate}
\item[(i)] $0\in S_n$, for all $n\in I$
\item[(ii)] for each $n\in I$, if $a,a'\in S_n$ and $a'\in S_{\alpha_n(a)}$ then $a+a'\in S_n$
\item[(iii)] $S_{n\cdot m}\subseteq S_n\cap S_m$
\end{enumerate}
induces a topology $\tau$ on $B$ determined as
\[\mathcal{O}\in \tau \Leftrightarrow \forall x\in \mathcal{O}, \exists n\in I, x+S_n\subseteq \mathcal{O}.
\]
\end{proposition}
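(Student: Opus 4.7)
The plan is to invoke condition (c) of Theorem~\ref{thm: I-cartesian spaces} with
\[
N\colon I\times B\to\mathcal{P}(B),\qquad N(n,x)=x+S_n.
\]
If the three clauses of (c) hold for this $N$, the equivalence (b)$\Leftrightarrow$(c) in that theorem tells us that the prescription in the statement really is a topology and that $(B,\tau)$ is $I$-cartesian. Equivalently, one could build the cartesian spacial fibrous preorder $x\leq^n y \Leftrightarrow y\in x+S_n$ directly and appeal to Proposition~\ref{thm: fibrous to spaces}; the computation is the same, so I would take the shorter route through (c).

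Clauses (c)(i) and (c)(iii) will be quick. Since $0\in S_n$, one has $x=x+0\in N(n,x)$, which is (c)(i). For (c)(iii), hypothesis (iii) together with the fact that left-translation by $x$ is a well-defined map $B\to B$ gives
\[
N(n\cdot m,x)=x+S_{n\cdot m}\subseteq x+(S_n\cap S_m)\subseteq N(n,x)\cap N(m,x).
\]
No cancellation in $B$ is used at any point, so the argument will apply to an arbitrary monoid rather than a group.

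The substance of the proof lies in (c)(ii). Given $y\in N(n,x)$, I would fix a representation $y=x+a$ with $a\in S_n$; the task is then to exhibit $k\in I$ such that $y+S_k\subseteq x+S_n$, i.e.\ such that $a+a'\in S_n$ for every $a'\in S_k$. The naive candidate $k=\alpha_n(a)$ will not work, because hypothesis (ii) requires simultaneously $a,a'\in S_n$ and $a'\in S_{\alpha_n(a)}$ before one may conclude $a+a'\in S_n$, and nothing a priori forces $S_{\alpha_n(a)}$ into $S_n$. The trick I intend to use is to absorb the missing inclusion into the index by setting
\[
k=n\cdot\alpha_n(a),
\]
for then hypothesis (iii) yields $S_k\subseteq S_n\cap S_{\alpha_n(a)}$ for free; every $a'\in S_k$ lies in both $S_n$ and $S_{\alpha_n(a)}$, hypothesis (ii) delivers $a+a'\in S_n$, and hence $y+a'=x+(a+a')\in N(n,x)$. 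This is the only step where I anticipate a real obstacle, and once it is settled Theorem~\ref{thm: I-cartesian spaces} delivers the proposition.
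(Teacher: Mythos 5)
Your proposal is correct and takes essentially the same approach as the paper: the paper's proof verifies axioms (C1)--(C3) for the relation $x\leq^n y\Leftrightarrow y\in x+S_n$ with $\partial^n(x,y)=\alpha_n(a)\cdot n$ and invokes Proposition~\ref{thm: fibrous to spaces}, which is exactly your computation rephrased through condition $(c)$ of Theorem~\ref{thm: I-cartesian spaces}. In particular, your key device of taking $k=n\cdot\alpha_n(a)$ so that hypothesis (iii) forces $S_k\subseteq S_n\cap S_{\alpha_n(a)}$ is precisely the paper's choice (up to the harmless order of the factors, which does not matter since (iii) gives containment in both).
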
 
\begin{proof}
We make use of Proposition \ref{thm: fibrous to spaces} by showing that the system $(B,(\leq^n )_{n\in I},(\partial^n)_{n\in I})$ is a cartesian  spacial fibrous preorder with
\[x\leq^n y\Leftrightarrow \exists a\in S_n, x+a=y\] 
and $\partial^n(x,y)=\alpha_n(a)\cdot n$ while noting that $N(n,x)=\{y\in B\mid x\leq^n y\}$ is the same as $x+S_n=\{x+a\mid a\in S_n\}$.

Axiom (C1) holds because $0\in S_n$ and hence $x\leq^n x$ for all $x\in B$ and $n\in I$.

In proving axiom (C2) we observe that if $x\leq^n y$, that is, $x+a=y$ for some $a\in S_n$, and if $y\leq^m z$, with $m=\alpha_n(a)\cdot n$, which means $y+a'=z$ for some $a'\in S_m$, then by condition $(iii)$ we conclude that $a'\in S_n$ and $a'\in S_{\alpha_n(a)}$. Condition $(ii)$ now tells us that $a+a'\in S_n$ and hence $x\leq^n z$. Indeed, there exists $a''=a+a'\in S_n$ such that $x+a''=z$.

Axiom (C3) is a straightforward consequence of condition $(iii)$. If $x\leq^{n\cdot m}y$, that is $x+a=y$ for some $a\in S_{n\cdot m}$, then $x\leq^n y$ and $x\leq^m y$ since, by $(iii)$, $a\in S_n$ and $a\in S_m$.
\end{proof}

In particular, if there exists an action of $I$ on $B$, in the sense of a map $\xi\colon{I\times B\to B}$ such that   for all $n,m\in I$ and $x,y\in B$
\begin{enumerate}
\item  $\xi(1,x)=x$, $\xi(n,0)=0$,
\item $\xi(n,x+y)=\xi(n,x)+\xi(n,y)$
\item $\xi(n\cdot m,x)=\xi(n,\xi(m,x))=\xi(m,\xi(n,x))$,
 \end{enumerate}
  then we may wonder if each $S_n$ in the previous proposition is determined from $S_1$ as $S_n=\{\xi(n,a)\mid a\in S_1\}$.

A monoid $(B,+,0)$ equipped with an action $\xi$ of $I$ on $B$ in the sense above is said to be an $I$-module and represented as $(B,+,0,\xi)$.
 
\begin{theorem}\label{thm: I-module}
Let $(I,\cdot,1)$ be a unitary magma, $(B,+,0,\xi)$ an $I$-module and $(B,\tau)$ a topological space. The following are equivalent:
\begin{enumerate}
\item[(a)] There exists $S\subseteq B$ and $\alpha\colon{S\to I}$ such that 
\begin{enumerate}
\item[(i)] $0\in S$
\item[(ii)] $a+\xi(\alpha(a),a')\in S$, for all $a,a'\in S$
\end{enumerate}
for which $\tau$ is determined as
\[\mathcal{O}\in \tau \Leftrightarrow \forall x\in \mathcal{O}, \exists n\in I, x+S_n\subseteq \mathcal{O}
\]
with $S_n=\{\xi(n,a)\in B\mid a\in S\}$.
\item[(b)] There exists $S\subseteq B$ such that 
\begin{enumerate}
\item[(i)] $0\in S$
\item[(ii)] $S\subseteq \{y\in B\mid \exists n\in I, y+S_n\subseteq S\}$, with $$S_n=\{\xi(n,a)\in B\mid a\in S\}$$
\end{enumerate}
for which $\tau$ is determined as
\[\mathcal{O}\in \tau \Leftrightarrow \forall x\in \mathcal{O}, \exists n\in I, x+S_n\subseteq \mathcal{O}.
\]
\item[(c)] There exists $S\in\tau$ and $\gamma\colon{\{(U,x)\mid x\in U\in \tau\}\to I}$ such that
\begin{enumerate}
\item[(i)] $0\in S$
\item[(ii)] $x+S_n \in \tau$ for all $x\in B$ and $n\in I$ with $$S_n=\{\xi(n,a)\in B\mid a\in S\}$$
\item[(iii)] if $x\in U\in\tau$ and $n=\gamma(U,x)$ then $x+S_n\subseteq U$. 
\end{enumerate}
\end{enumerate}
Moreover, under these conditions, $(B,\tau)$ is an $I$-cartesian space with $x\leq^n y$ defined as $y=x+\xi(n,a)$ for some $a\in S$ and $\partial^n(x,y)=\alpha(a)\cdot n$ if and only if $\xi(n,a)\in S$ for all $a\in S$ and $n\in I$.
\end{theorem}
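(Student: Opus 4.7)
The plan is to establish the two equivalences $(a) \Leftrightarrow (b)$ and $(b) \Leftrightarrow (c)$, reducing the main work to $(b) \Rightarrow (c)$ which carries the nontrivial content; the moreover clause is then verified by checking axioms (C1)--(C3) directly. For $(a) \Rightarrow (b)$, given $a \in S$ set $n = \alpha(a)$: then $a + S_n = \{a + \xi(\alpha(a), a') \mid a' \in S\} \subseteq S$ by (a)(ii). Conversely, $(b) \Rightarrow (a)$ is obtained by choosing, for each $a \in S$, some $\alpha(a) \in I$ with $a + S_{\alpha(a)} \subseteq S$. The implication $(c) \Rightarrow (b)$ is equally direct: for $a \in S$ (using $S \in \tau$ from the hypothesis), set $n = \gamma(S, a)$ and invoke (c)(iii).

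The core step is $(b) \Rightarrow (c)$, namely showing that every translate $x + S_n$ lies in $\tau$ and that $S \in \tau$. Given $y = x + \xi(n, a) \in x + S_n$ with $a \in S$, use (b)(ii) to pick $m$ such that $a + S_m \subseteq S$, and set $k = n \cdot m$. The $I$-module axioms $\xi(n \cdot m, a') = \xi(n, \xi(m, a'))$ together with additivity of $\xi(n, -)$ yield
\[
\xi(n, a) + \xi(k, a') = \xi(n, a + \xi(m, a')) \in \xi(n, S) = S_n,
\]
so $y + S_k \subseteq x + S_n$. Taking $n = 1$ and $x = 0$ shows $S \in \tau$, and $\gamma$ is defined by a choice of such a witness for each pair $(U, x)$ with $x \in U \in \tau$.

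For the moreover statement, with $\leq^n$ and $\partial^n$ as defined, axiom (C1) is immediate from $0 \in S$ and $\xi(n, 0) = 0$. Axiom (C2) follows from the same computation as above, rewriting $\xi(\alpha(a) \cdot n, a') = \xi(n, \xi(\alpha(a), a'))$ and applying (a)(ii) to conclude $z = x + \xi(n, a + \xi(\alpha(a), a')) \in x + S_n$. Axiom (C3) is where the additional hypothesis enters: if $y = x + \xi(n \cdot m, a) = x + \xi(n, \xi(m, a))$, then the relation $x \leq^n y$ demands a witness $\xi(m, a) \in S$, so the condition $\xi(n, a) \in S$ for all $n \in I$ and $a \in S$ is sufficient. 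Conversely, applying (C3) to the trivial decomposition $n = 1 \cdot n$ and specialising to $x = 0$, $y = \xi(n, a)$, one obtains $0 \leq^1 \xi(n, a)$, which forces $\xi(n, a) \in S$.

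The main obstacle is the choice $k = n \cdot m$ in $(b) \Rightarrow (c)$: the insight is that the extra $n$-scaling factor inside $\xi(k, a')$ is precisely what allows additivity of $\xi(n, -)$ to keep the translate within $S_n$. This same identity also drives the verification of (C2) in the moreover clause.
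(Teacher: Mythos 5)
Your proposal follows essentially the same route as the paper: the key computation in $(b)\Rightarrow(c)$, namely $\xi(n,a)+\xi(n\cdot m,a')=\xi(n,a+\xi(m,a'))$ with the witness $k=n\cdot m$, is exactly the paper's argument (the paper writes the product in the other order, which is immaterial since the action axiom gives $\xi(n\cdot m,x)=\xi(n,\xi(m,x))=\xi(m,\xi(n,x))$), and your treatment of the ``moreover'' clause --- checking (C1)--(C3) directly and extracting the converse from (C3) at $x=0$ with the decomposition $n=n\cdot 1$ --- matches the paper's, which phrases the same verification in terms of the map $\eta(n,x)=x+S_n$ from Theorem \ref{thm: I-cartesian spaces}(e).

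One step is missing, however. In $(c)\Rightarrow(b)$ you only establish condition $(b)(ii)$ for the set $S$; but conditions $(a)$ and $(b)$ each also assert that $\tau$ \emph{is determined} by the formula $\mathcal{O}\in\tau\Leftrightarrow\forall x\in\mathcal{O},\exists n\in I, x+S_n\subseteq\mathcal{O}$, whereas condition $(c)$ does not postulate this --- it only gives $S\in\tau$, openness of the translates $x+S_n$, and the map $\gamma$. So the passage out of $(c)$ must prove that the given topology coincides with the one generated by the translates: the forward inclusion uses $\gamma(\mathcal{O},x)$ via $(c)(iii)$, and the reverse inclusion uses the union $\mathcal{O}=\bigcup_{x\in\mathcal{O}}\bigl(x+S_{n(x)}\bigr)$ together with $(c)(ii)$. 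The paper carries this out explicitly at the end of its $(c)\Rightarrow(a)$ step. The omission is routine to repair, but without it the cycle of implications does not close, since the determination of $\tau$ is part of what statements $(a)$ and $(b)$ claim.
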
 
\begin{proof}
$(a)\Rightarrow (b)$ We only need to prove that $S$ is contained in $$\{y\in B\mid \exists n\in I, y+S_n\subseteq S\}$$ with $S_n=\{\xi(n,a)\in B\mid a\in S\}$. This is the same as proving that for every $a\in S$ there exists $n\in I$ for which $a+S_n\subseteq S$. It follows from condition $(a)(ii)$ by choosing $n=\alpha(a)$. Indeed, for every $a\in S$, we have $a+S_{\alpha(a)}\subseteq S$ as a consequence of $S_{\alpha(a)}=\{\xi(\alpha(a),a')\mid a'\in S\}$ together with condition $(a)(ii)$.

$(b)\Rightarrow (c)$ Let $x\in B$ and $n\in I$. In order to show that $x+S_n\in \tau$ we consider $z=x+\xi(n,a)$ for some $a\in S$ and find $m\in I$ for which $z+S_m\subseteq x+S_n$. Using condition $(b)(ii)$ and given that $a\in S$, we obtain $k\in I$ such that $a+S_k\subseteq S$, or, in other words, $a+\xi(k,a')\in S$ for all $a'\in S$. We now take $m=k\cdot n$ and show $z+S_m\subseteq x+S_n$.Indeed, for every $a'\in S$, taking into account that $\xi$ is an action, we observe
\begin{eqnarray*}
z+\xi(m,a') &=& z+\xi(k\cdot n,a')=z+\xi(n,\xi(k,a'))\\
&=& x+\xi(n,a)+\xi(n,\xi(k,a'))\\
&=& x+\xi(n,a+\xi(k,a'))
\end{eqnarray*}
showing that $z+\xi(m,a')=x+\xi(n,a+\xi(k,a'))$ is in $x+S_n$ because $a+\xi(k,a')\in S$ for all $a'\in S$. In particular, when $x=0$ and $n=1$ we obtain $S\in \tau$. Thus, so far we have shown the existence of $S$ satisfying condition $(c)(ii)$. Condition $(c)(i)$ is clear while Condition $(c)(iii)$ is obtained by observing that if $x\in U\in \tau$ then, by the assumption on $\tau$, there exists $n\in I$ for which $x+S_n\subseteq U$, and  hence we choose $\gamma(U,x)=n$ in order to satisfy condition $(c)(iii)$.

$(c)\Rightarrow (a)$ Given $S\in \tau$, the map $\alpha\colon{S\to I}$ is defined as $\alpha(a)=\gamma(S,a)$. From condition $(c)(iii)$ it follows that \[a+S_{\alpha(a)}\subseteq S\]
which means that for every $a,a'\in S$,
\[a+\xi(\alpha(a),a')\in S.\]
This proves condition $(a)(ii)$. Condition $(a)(i)$ is clear. It remains to prove that $\tau$ is determined as prescribed. We observe, on the one hand, if $\mathcal{O}\in \tau$ then for every $x\in \mathcal{O}$ there exists $m=\gamma(\mathcal{O},x)\in I$ such that $x+S_m\subseteq \mathcal{O}$ as asserted by condition $(c)(iii)$. On the other hand, if $\mathcal{O}\subseteq B$ is such that for all $x\in \mathcal{O}$ there exists $m\in I$ with $x+S_m\subseteq \mathcal{O}$, then, because $x+S_m\in\tau$ and \[\mathcal{O}=\bigcup_{x\in \mathcal{O}}x+S_m\]
we conclude that $\mathcal{O}\in \tau$.

Moreover, under these conditions $(B,\tau)$ is $I$-cartesian, where the map $\eta$ (see item $(e)$ in Theorem \ref{thm: I-cartesian spaces}) is obtained as $\eta(n,x)=x+S_n$, if, and only if $S_n\subset S$ for all $n\in I$. Indeed, if $S_n\subset S$ for all $n\in I$ we observe:
\begin{enumerate}
\item[(i)] $x\in \eta(n,x)$, since $0\in S$ and $\xi(n,0)=0$.
\item[(ii)] if $x\in U\in\tau$ and $m=\gamma(U,x)$ then $x+S_m\subseteq U$, which is the same as $\eta(\gamma(U,x),x)\subseteq U$.
\item[(iii)] $\eta(n\cdot m,x)\subseteq \eta(n,x)\cap \eta(m,x)$ follows from $S_{n\cdot m}\subseteq S_n\cap S_m$ which is a consequence of $\xi(n,a)\in S$ for all $n\in I$ and $a\in S$ and the fact that $\xi$, being an action, is such that $\xi(n\cdot m,a)=\xi(n,\xi(m,a))=\xi(m,\xi(n,a))$.
\end{enumerate}

Conversely, if $(B,\tau)$ is $I$-cartesian with $\eta(n,x)=x+S_n$ then from $\eta(n\cdot m,x)\subseteq \eta(n,x)\cap \eta(m,x)$ for all $m,n\in I$ we deduce $S_n\subseteq S$ when $m=1$.
\end{proof}


 
\section{Examples} 

Some examples are presented so to illustrate the results on the previous section.

Theorem \ref{thm: I-module} can be specialized into the case when the unitary magma is the set of natural numbers with usual multiplication $I=(\mathbb{N},\cdot,1)$, and $(B,+,0)$ is any monoid considered as an $I$-module with $\xi(n,x)=nx$. In this case, any subset $S\subseteq B$ together with a map $\alpha\colon{S\to \mathbb{N}}$ satisfying the three conditions:
\begin{enumerate}
\item[(i)] $0\in S$
\item[(ii)] $a+\alpha(a)a'\in S$ for all $a,a'\in S$
\item[(iii)] $na\in S$ for all $a\in S$ and $n\in\mathbb{N}$.
\end{enumerate}
induces a topology $\tau$ on $B$ generated by the system of open neighbourhoods $N(n,x)=\{x+na\mid a\in S\}$.

Note that condition $(iii)$ is necessary so that the system $(B,\leq^n,\partial^n)$ with  $x\leq^n y$ whenever $y=x+na$ for some $a\in S$ and $\partial^n(x,y)=\alpha(a)n$ to be a cartesian spacial fibrous preorder. When $B$ is a group it offers a further comparison with Proposition \ref{thm: a group with a subset gives a fibrous etc} which will be deepened in a future work. 

When the map $\alpha\colon{S\to \mathbb{N}}$ is constant with $\alpha(a)=1$ then $S$ must be a submonoid.

Another example is obtained when $B=(B,+,\cdot,0,1)$ is a semi-ring. In this case we can take $I=(\mathbb{N}_0,+,0)$ and choose any submonoid $S$ for the additive structure, that is, $S\subseteq B$ with $0\in S$ and $a+a'\in S$ for all $a,a'\in S$. Now, every choice of an element $p\in S$ such that $p^n a\in S$, for all $a\in S$ and $n\in \mathbb{N}_0$, gives a topology on the set $B$ generated by the system of open neighbourhoods
\[N(n,x)=\{x+p^n a\mid a\in S\}.\]

Indeed, we define an $I$-module with action $\xi(n,x)=p^nx$, by taking successive powers of $p$ and considering $p^0=1$. In this case the map $\alpha\colon{S\to\mathbb{N}_0}$ is the constant map $\alpha(a)=0$.

When $S$ is not closed under addition then we have to find for every $a\in S$ a non-negative integer $\alpha(a)\in \mathbb{N}_0$ such that $a+p^{\alpha(a)}a'\in S$ for all $a'\in S$. The simple example of the real numbers $(\mathbb{R},+,\cdot,0,1)$ with usual addition and multiplication, when $S=[0,1[$ and $p=\frac{1}{2}$, illustrates this situation.

One more example with $I=(\mathbb{N}_0,+,0)$ is constructed as follows. 

Take any monoid $(X,+,0)$ together with $t\colon{X\to X}$, an endomorphism of $X$, and consider a subset $P\subseteq X$ with $0\in P$. Let $B$ be the collection of all maps from $X$ to $X$ considered as a monoid with component-wise addition, that is, $B=(X^X,+,0_X)$. With $I=(\mathbb{N}_0,+,0)$ we define an action on $B$ as $\xi(n,f)=t^n\circ f$ for every non-negative integer $n$ and every map $f\colon{X\to X}$. The action is defined for every selected endomorphism $t$ of $X$.

In order to produce a topology on the set $B$ we need to find a subset $S\subseteq B $ together with a map $\alpha\colon{S\to \mathbb{N}_0}$ as in Theorem \ref{thm: I-module}. We take
\[S=\{f\in B\mid \exists n\in \mathbb{N}_0,\forall x\in X,\forall y\in P,f(x)+t^n(y)\in P\}\]
and put $\alpha(f)=n$ where $n$ is the smallest non-negative integer such that for all $x\in X$ and $y\in P$, $f(x)+t^{n}(y)\in P$.

Clearly, the constant null function $0_X\colon{X\to X}$ is in $S$ and moreover $\alpha(0_X)=0$.

The reason why $f+t^{\alpha(f)}\circ g\in S$ for all $f,g\in S$ is mainly because $t$ is an endomorphism and so $t^n\circ g+t^{n+m}=t^n\circ g+t^n\circ t^m=t^n\circ(g+t^m)$. Indeed, if $f,g\in S$ with $\alpha(f)=n$ and $\alpha(g)=m$ then there exists $k=n+m$ such that $(f+t^n\circ g)(x)+t^k(y)\in P$ for all $x\in X$ and $y\in P$.

The imposition that $t^n\circ f\in S$ for all $n\in \mathbb{N}_0$ and $f\in S$ is guaranteed as soon as the endomorphism $t\colon{X\to X}$ satisfies the condition $t(y)\in P$ for all $y\in P$. In that case $\alpha(t^n\circ f)=n+\alpha(f)$.

A concrete simple example is the following. Take $X=(]0,1],\cdot,1)$ to be the unit interval of  real numbers with usual multiplication and not containing the element $0$. Consider the endomorphism $t(x)=\sqrt{x}=x^{\frac{1}{2}}$ and let $P=]\frac{1}{2},1]$. It follows that $f\in S$ if and only if $f(x)\in P$ for all $x\in X$. For every $f\in S$, $\alpha(f)=n$ with $n$ being the smallest non-negative integer for which
\[u\cdot\left(\frac{1}{2}\right)^{\frac{1}{2^n}}>\frac{1}{2}\]
with $u=\inf{\{f(x)\mid x\in X\}}$. Note that $u$ is always greater than $\frac{1}{2}$, and 
\[\lim_{n\to+\infty}\left(\frac{1}{2}\right)^{\frac{1}{2^n}}=1.\]

As a consequence of Theorem \ref{thm: I-module} we obtain a topology on $B$ generated from
\[N(n,u)=\{u+t^n\circ v\mid v\in S\}\]
as system of open neighbourhoods.


\section{Conclusion}

The notion of cartesian spacial fibrous preorder has been introduced as a system $(X,(\leq^{i})_{i\in I},(\partial^{i})_{i\in I})$ indexed over a unitary magma, $I$, and satisfying conditions (C1), (C2) and (C3). Every such structure gives rise to a topological space (Proposition \ref{thm: fibrous to spaces}) and the spaces which are thus obtained were called $I$-cartesian. It has been proven (Theorem~\ref{thm: I-cartesian spaces}) that a space $(X,\tau)$ is $I$-cartesian if and only if there exists a map $\eta\colon{I\times X\to \tau}$ such that $x\in \eta(i,x)$ and  $\eta(i j,x)\subseteq \eta(i,x)\cap \eta(j,x)$ for all $i,j\in I$ and $x\in X$, together with a map $$\gamma\colon{\{(U,x)\mid x\in U\in \tau\}\to I}$$ such that $\eta(\gamma(U,x),x)\subseteq U$. In a sequel to this work we will concentrate our attention on the structure of morphisms between cartesian spacial fibrous preorders thus elevating the characterization of Theorem \ref{thm: I-cartesian spaces} to a categorical equivalence. The structure of metric spaces has been  decomposed as an indexing unitary magma, a preorder, a lax-left-associative Mal'tsev operation, and a linking map (Proposition \ref{thm: metric spaces}). The special case of a normed vector space has been  treated in three different manners:
\begin{enumerate}
\item  In Proposition \ref{thm: a group with a subset gives a fibrous etc} as  a group with a distinguished subset containing the origin and satisfying some conditions (intuitively a convex and bounded neighbourhood of the origin, namely the open ball of radius one) together with a map intuitively measuring how close to the boundary an element is.
\item In Proposition \ref{thm: metric spaces} as a special case of a lax-left-associative Mal'tsev operation with $p(a,x,y)=a-\|y-x\|$.
\item In Theorem \ref{thm: I-module} as an $I$-module  with action $\xi(n,x)=\frac{1}{n} x$, $S=\{a\in B\mid  \|a\|<1\}$ and $\alpha(a)$ such that $\frac{1}{\alpha(a)}\leq 1-\|a\|$.
\end{enumerate}
 A list of examples has been presented as a way to illustrate possible applications. This is a first step for a systematic analysis on the structure of topological spaces.

\end{document}